\newtheorem{theorem}{Theorem}[section]
\newtheorem{lemma}[theorem]{Lemma}
\theoremstyle{definition}
\newtheorem{example}[theorem]{Example}
\theoremstyle{remark}
\numberwithin{equation}{section}
\newcommand{\ta}{\theta}
\newcommand{\ty}{\infty}
\begin{document}

\title[An elliptic Taylor expansion theorem]{A Taylor expansion
theorem for an elliptic extension of the Askey--Wilson operator}

\author{Michael J.\ Schlosser}
\address{Fakult\"at f\"ur Mathematik, Universit\"at Wien,
Nordbergstra{\ss}e 15, A-1090 Vienna, Austria}
\email{michael.schlosser@univie.ac.at}
\urladdr{http://www.mat.univie.ac.at/{\textasciitilde}schlosse}
\thanks{The author was partly supported by FWF Austrian Science Fund
grants \hbox{P17563-N13}, and S9607 (the latter is part
of the Austrian National Research Network
``Analytic Combinatorics and Probabilistic Number Theory'').}

\subjclass{Primary 33D15; Secondary 33D45, 33E05, 33E20, 41A30, 41A58}
\date{February 1, 2008}

\keywords{Askey--Wilson operator,
elliptic hypergeometric series,
Frenkel and Turaev's ${}_{10}V_9$ summation,
Frenkel and Turaev's ${}_{12}V_{11}$ transformation,
nonterminating ${}_8\phi_7$ summation,
elliptic biorthogonal functions,
Taylor expansions}

\begin{abstract}
We establish Taylor series expansions in rational (and elliptic)
function bases using E.~Rains' elliptic extension of the Askey--Wilson
divided difference operator. The expansion theorem we consider extends
M.~E.~H.~Ismail's expansion for the Askey--Wilson monomial basis.
Three immediate applications (essentially already due to Rains)
include simple proofs of Frenkel and Turaev's elliptic extensions of
Jackson's ${}_8\phi_7$ summation and of Bailey's ${}_{10}\phi_9$
transformation, and the computation of the connection
coefficients of Spiridonov's elliptic extension of Rahman's
biorthogonal rational functions. We adumbrate other examples
including the nonterminating extension of Jackson's ${}_8\phi_7$
summation and a quadratic expansion.
\end{abstract}

\maketitle

\section{Introduction}

Taylor series expansion is a powerful and well-known tool in analysis
for studying the local behaviour of a suitable function
(being approximated by its partial Taylor sums).
The explicit expansion of a function in terms of another
given basis of the function space is on one hand an important concept
in harmonic analysis, and on the other hand,
from a more algebraic point of view, it is simply
a fundamental technique for obtaining identities, which,
for instance, is one of the main ideas of umbral calculus~\cite{RKO}.

In \cite{Is1}, Ismail gave a Taylor expansion theorem
involving the Askey--Wilson divided difference operator.
He utilized it to give simple proofs of the $q$-Pfaff--Saalsch\"utz summation,
and of the more general Sears transformation (relating two Askey--Wilson
polynomials). More $q$-Taylor expansions related to the Askey--Wilson
operator were given in \cite{IS} and later in \cite{MP}
(to cite just a few relevant papers).
As a matter of fact, none of the expansions obtained in the
aforementioned papers involved {\em well-poised} series.
Such expansions and related difference operators were however
considered (in more generality, namely in the setting of
{\em multivariate elliptic hypergeometric series})
by Rains~\cite{Ra1}, \cite{Ra2}, and were also investigated by
Rosengren~\cite{R1}, \cite{R2}.

The purpose of the present paper is two-fold.
Although the elliptic Taylor expansion in Theorem~\ref{thnew}
has not been stated explicitly before (to the author's knowledge),
it is implicit from the (more general) work of Rains~\cite{Ra1}
who ad-hoc also gave corresponding applications.
Regarding the much higher level of generality and complexity
of results in \cite{Ra1}, it appears (to the present author)
that these results, even in their simplest noteworthiest cases
can easily be missed by non-specialists (who are maybe not
so much interested in the multivariate theory which requires a
more elaborate setup).
Therefore one of our intentions is to make these results easy accessible.
The other aim is to announce some new applications concerning
infinite Taylor expansions in the non-elliptic case involving
well-poised basic series; see the final section.

\section{Preliminaries}

For the following material, we refer to Gasper and Rahman's text
\cite{GR}. Throughout this paper, we assume $q$ to be a fixed complex
number satisfying $|q|<1$.

\subsection{Basic hypergeometric series}

For any complex number $a$ and integer $n$ the $q$-shifted
factorial is defined by
\begin{equation}\label{defpoch}
(a;q)_n = \frac{(a;q)_\ty}{(aq^n;q)_\ty},\qquad\text{where}\qquad
(a;q)_\ty = \prod_{j\ge 0}(1-aq^j).
\end{equation}
For products of $q$-shifted factorials we use the short notation
\begin{equation*}
(a_1, a_2, \ldots, a_m;q)_n = \prod^m_{k=1} (a_k;q)_n,
\end{equation*}
where $n$ is an integer or infinity.
A list of useful identities for manipulating the
$q$-shifted factorials is given in \cite[Appendix~I]{GR}.

We use
\begin{equation}\label{defhyp}
{}_{s+1}\phi_s\!\left[\begin{matrix}a_1,a_2,\dots,a_{s+1}\\
b_1,b_2,\dots,b_s\end{matrix}\,;q,z\right]:=
\sum _{k=0} ^{\infty}\frac {(a_1,a_2,\dots,a_{s+1};q)_k}
{(q,b_1,\dots,b_s;q)_k}z^k
\end{equation}
to denote the {\em basic hypergeometric ${}_{s+1}\phi_s$ series}.
In \eqref{defhyp}, $a_1,\dots,a_{s+1}$ are
called the {\em upper parameters}, $b_1,\dots,b_s$ the
{\em lower parameters}, $z$ is the {\em argument}, and 
$q$ the {\em base} of the series.
The ${}_{s+1}\phi_s$ series terminates if one of the upper parameters,
say $a_{s+1}$, is of the form $q^{-n}$ for a nonnegative integer $n$.
If the ${}_{s+1}\phi_s$ series does not terminate, it converges when $|z|<1$.

The classical theory of basic hypergeometric series contains
numerous summation and transformation formulae
involving ${}_{s+1}\phi_s$ series.
Many of these summation theorems require
that the parameters satisfy the condition of being
either balanced and/or very-well-poised.
An ${}_{s+1}\phi_s$ basic hypergeometric series is called
{\em balanced} if $b_1\cdots b_s=a_1\cdots a_{s+1}q$ and $z=q$.
An ${}_{s+1}\phi_s$ series is {\em well-poised} if
$a_1q=a_2b_1=\cdots=a_{s+1}b_s$. An ${}_{s+1}\phi_s$ basic
hypergeometric series is called {\em very-well-poised}
if it is well-poised and if $a_2=-a_3=q\sqrt{a_1}$.
Note that the factor
\begin{equation*}
\frac {1-a_1q^{2k}}{1-a_1}
\end{equation*}
appears in a very-well-poised series.
The parameter $a_1$ is usually referred to as the
{\em special parameter} of such a series.

One of the most important theorems in the theory of basic
hypergeometric series is 
Jackson's~\cite{J} terminating very-well-poised balanced
${}_8\phi_7$ summation (cf.\ \cite[Eq.~(2.6.2)]{GR}):
\begin{multline}\label{87gl}
{}_8\phi_7\!\left[\begin{matrix}a,\,q\sqrt{a},-q\sqrt{a},b,c,d,
a^2q^{1+n}/bcd,q^{-n}\\
\sqrt{a},-\sqrt{a},aq/b,aq/c,aq/d,bcdq^{-n}/a,aq^{1+n}\end{matrix}\,;q,
q\right]\\
=\frac {(aq,aq/bc,aq/bd,aq/cd;q)_n}
{(aq/b,aq/c,aq/d,aq/bcd;q)_n}.
\end{multline}
This identity stands on the top of the classical hierarchy of summations
for basic hypergeometric series. Special cases include the terminating
and nonterminating very-well-poised ${}_6\phi_5$ summations,
the $q$-Pfaff--Saalsch\"utz summation, the $q$-Gau{\ss} summation,
the $q$-Chu--Vandermonde summation and the termininating and
nonterminating $q$-binomial theorem, see \cite{GR}.

\subsection{Elliptic hypergeometric series}

Here, we refer to Chapter~11 of
Gasper and Rahman's text \cite{GR}. Define a modified
Jacobi theta function with argument $x$ and nome $p$ by
\begin{equation}\label{mjtf}
\ta (x; p):= (x; p)_\ty (p/x; p)_\ty\,,\quad\quad
\ta (x_1, \ldots, x_m;p) = \prod^m_{k=1} \ta (x_k;p),
\end{equation}
where $ x, x_1, \ldots, x_m \ne 0,\ |p| < 1$.
We note the following useful properties of theta functions:
\begin{equation}\label{tif}
\ta(x;p)=-x\,\ta(1/x;p),
\end{equation}
\begin{equation}\label{p1id}
\ta(px;p)=-\frac 1x\,\ta(x;p),
\end{equation}
and Riemann's {\em addition formula}
\begin{equation}\label{addf}
\ta(xy,x/y,uv,u/v;p)-\ta(xv,x/v,uy,u/y;p)
=\frac uy\,\ta(yv,y/v,xu,x/u;p)
\end{equation}
(cf.\ \cite[p.~451, Example 5]{WW}).

Further, define a {\em theta shifted factorial} analogue of the
$q$-shifted factorial by
\begin{equation}\label{defepoch}
(a;q,p)_n = \begin{cases}
\prod^{n-1}_{k=0} \ta (aq^k;p),& n = 1, 2, \ldots\,,\cr
1,& n = 0,\cr
1/\prod^{-n-1}_{k=0} \ta (aq^{n+k};p),& n = -1, -2, \ldots,
\end{cases}
\end{equation}
and let
\begin{equation*}
(a_1, a_2, \ldots, a_m;q, p)_n = \prod^m_{k=1} (a_k;q,p)_n,
\end{equation*}
where $a, a_1,\ldots,a_m \neq 0$.
Notice that $\ta (x;0) = 1-x$ and, hence, $(a;q, 0)_n = (a;q)_n$
is a $q$-{\it shifted factorial} in base $q$.
The parameters $q$ and $p$ in $(a;q,p)_n$ are called the
{\it base} and {\it nome}, respectively, and
$(a;q,p)_n$ is called the $q,p$-{\it shifted factorial}.
Observe that
\begin{equation}\label{pid}
(pa;q,p)_n=(-1)^na^{-n}q^{-\binom n2}\,(a;q,p)_n,
\end{equation}
which follows from \eqref{p1id}. 
A list of other useful identities for manipulating the
$q,p$-shifted factorials is given in \cite[Sec.~11.2]{GR}.

We call a series $\sum c_n$ an {\it elliptic hypergeometric series} if
$g(n) = c_{n+1}/c_n$ is an elliptic function of $n$ with $n$
considered as a complex variable; i.e., the function $g(x)$
is a doubly periodic meromorphic function of the complex variable $x$.
Without loss of generality, by the theory of theta functions,
we may assume that
\begin{equation*}
g(x)=\frac{\ta(a_1q^x,a_2q^x,\dots,a_{s+1}q^x;p)}
{\ta(q^{1+x},b_1q^x,\dots,b_sq^x;p)}\,z,
\end{equation*}
where the {\em elliptic balancing condition}, namely
$$a_1a_2\cdots a_{s+1}=qb_1b_2\cdots b_s,$$
holds.
If we write $q=e^{2\pi i\sigma}$, $p=e^{2\pi i\tau}$,
with complex $\sigma$, $\tau$, then $g(x)$ is indeed periodic in $x$
with periods $\sigma^{-1}$ and $\tau\sigma^{-1}$.

The general form of an elliptic hypergeometric series is thus
\begin{equation*}
{}_{s+1}E_s\!\left[\begin{matrix}a_1,\dots,a_{s+1}\\
b_1,\dots,b_s\end{matrix};q,p;z\right]
:=\sum_{k=0}^{\infty}\frac
{(a_1,a_2,\dots,a_{s+1};q,p)_k}{(q,b_1\dots,b_s;q,p)_k}z^k,
\end{equation*}
provided $a_1a_2\cdots a_{s+1}=qb_1b_2\cdots b_s$.
Here $a_1,\dots,a_{s+1}$ are the upper parameters,
$b_1,\dots,b_s$ the lower parameters, $q$ is the base,
$p$ the nome, and $z$ is the argument of the series.
For convergence reasons, one usually requires $a_{s+1}=q^{-n}$
($n$ being a nonnegative integer),
so that the sum is in fact finite.

{\em Very-well-poised elliptic hypergeometric series} are defined as
\begin{multline}\label{vwpehs}
{}_{s+1}V_s(a_1;a_6,\dots,a_{s+1};q,p;z)\\:=
{}_{s+1}E_s\!\left[\begin{matrix}{\scriptstyle a_1},\,
{\scriptstyle qa_1^{\frac 12}},{\scriptstyle -qa_1^{\frac 12}},
{\scriptstyle qa_1^{\frac 12}/p^{\frac 12}},
{\scriptstyle -qa_1^{\frac 12}p^{\frac 12}},{\scriptstyle a_6},
\dots,{\scriptstyle a_{s+1}}\\
{\scriptstyle a_1^{\frac 12}},{\scriptstyle -a_1^{\frac 12}},
{\scriptstyle a_1^{\frac 12}p^{\frac 12}},
{\scriptstyle -a_1^{\frac 12}/p^{\frac 12}},
{\scriptstyle a_1q/a_6},\dots,
{\scriptstyle a_1q/a_{s+1}}\end{matrix};q,p;-z\right]\\
=\sum_{k=0}^{\infty}\frac{\ta(a_1q^{2k};p)}{\ta(a_1;p)}
\frac{(a_1,a_6,\dots,a_{s+1};q,p)_k}
{(q,a_1q/a_6,\dots,a_1q/a_{s+1};q,p)_k}(qz)^k,
\end{multline}
where
\begin{equation*}
q^2a_6^2a_7^2\cdots a_{s+1}^2=(a_1q)^{s-5}.
\end{equation*}
It is convenient to abbreviate
\begin{equation*}
{}_{s+1}V_s(a_1;a_6,\dots,a_{s+1};q,p)
:={}_{s+1}V_s(a_1;a_6,\dots,a_{s+1};q,p;1).
\end{equation*}
Note that in \eqref{vwpehs} we have used
\begin{equation*}
\frac{\ta(aq^{2k};p)}{\ta(a;p)}=
\frac{(qa^{\frac 12},-qa^{\frac 12},qa^{\frac 12}/p^{\frac 12},
-qa^{\frac 12}p^{\frac 12};q,p)_k}
{(a^{\frac 12},-a^{\frac 12},
a^{\frac 12}p^{\frac 12},-a^{\frac 12}/p^{\frac 12};q,p)_k}
(-q)^{-k},
\end{equation*}
which shows that in the elliptic case the number of
pairs of numerator and denominator paramters
involved in the construction of the {\em very-well-poised term} is {\em four}
(whereas in the basic case this number is {\em two},
in the ordinary case only {\em one}).

The above definitions for ${}_{s+1}E_{s}$ and ${}_{s+1}V_{s}$ series
are due to Spiridonov~\cite{Sp}, see \cite[Ch.~11]{GR}.

In their study of elliptic $6j$ symbols (which are elliptic solutions
of the Yang--Baxter equation found by Baxter~\cite{B} and Date et
al.~\cite{DJKMO}), Frenkel and Turaev~\cite{FT}
discovered the following ${}_{12}V_{11}$ transformation:
\begin{multline}\label{12V11}
{}_{12}V_{11}(a;b,c,d,e,f,\lambda a q^{n+1}/ef,q^{-n};q,p)
=\frac {(aq,aq/ef,\lambda q/e,\lambda q/f;q,p)_n}
{(aq/e,aq/f,\lambda q/ef,\lambda q;q,p)_n}\\\times
{}_{12}V_{11}(\lambda;\lambda b/a,\lambda c/a,\lambda d/a,e,f,
\lambda a q^{n+1}/ef,q^{-n};q,p),
\end{multline}
where $\lambda=a^2q/bcd$. This is an extension of Bailey's
very-well-poised ${}_{10}\phi_9$ transformation~\cite[Eq.~(2.9.1)]{GR},
to which it reduces when $p=0$.

The ${}_{12}V_{11}$ transformation in \eqref{12V11} appeared as 
a consequence of the tetrahedral symmetry of the elliptic $6j$ symbols.
Frenkel and Turaev's transformation contains as a special case
the following summation formula,
\begin{equation}\label{10V9}
{}_{10}V_9(a;b,c,d,e,q^{-n};q,p)
=\frac {(aq,aq/bc,aq/bd,aq/cd;q,p)_n}
{(aq/b,aq/c,aq/d,aq/bcd;q,p)_n},
\end{equation}
where $a^2q^{n+1}=bcde$.
The $_{10}V_9$ summation is an elliptic analogue of Jackson's
$_8\phi_7$ summation formula \eqref{87gl}.
A striking feature of elliptic hypergeometric series is that
already the simplest identities involve many parameters.
The fundamental identity at the ``bottom'' of the hierarchy of
identities for elliptic hypergeometric series is the $_{10}V_9$ summation.
When keeping the nome $p$ arbitrary (while $|p|<1$) there is no way
to specialize (for the sake of obtaining lower order identities)
any of the free parameters of an elliptic hypergeometric series
in form of a limit tending to zero or infinity, due to the issue
of convergence. For the same reason, elliptic hypergeometric series
are only well-defined as complex functions if they are terminating
(i.e., the sums are finite).
See Gasper and Rahman's text \cite[Ch.~11]{GR} for more details.

\section{The Askey--Wilson operator}

We will be considering meromorphic functions $f(z)$ symmetric in $z$
and $1/z$. Writing $z=e^{i\theta}$ (note that $\theta$ need not be real),
we may consider $f$ to be a function in $x=\cos\theta=(z+1/z)/2$ and write
$f[x]:=f(z)$.

Let $\mathcal D_q$ denote the {\em Askey--Wilson operator} acting on
$x=\cos\theta$. It is defined as follows:
\begin{equation}\label{awop}
\mathcal D_q f[x]=\frac{f(q^{\frac 12}z)-f(q^{-\frac 12}z)}
{\iota(q^{\frac 12}z)-\iota(q^{-\frac 12}z)},
\end{equation}
where $\iota[x]=x$ (i.e., $\iota(z)=(z+1/z)/2$).
If follows from \eqref{awop} that
\begin{equation}\label{awop1}
\mathcal D_q f[x]=\frac{f(q^{\frac 12}z)-f(q^{-\frac 12}z)}
{i(q^{\frac 12}-q^{-\frac 12})\sin\theta}.
\end{equation}
The operator $\mathcal D_q$ was introduced in \cite{AW} and is a
$q$-analogue of the differentiation operator. In particular, since 
\begin{equation}\label{awop2}
\mathcal D_q T_n[x]=
\frac{q^{\frac n2}-q^{-\frac n2}}
{q^{\frac 12}-q^{-\frac 12}}U_{n-1}[x],
\end{equation}
where $T_n[\cos\theta]=\cos n\theta$ and
$U_n[\cos\theta]=\sin(n+1)\theta/\sin\theta$ are the Chebyshev polynomials
of the first and second kind, one easily sees that $\mathcal D_q$ maps
polynomials to polynomials, lowering the degree by one.

In the calculus of the Askey--Wilson operator the so-called
``Askey--Wilson monomials'' $\phi_n(x;a)=(az,a/z;q)_n$
form a natural basis for polynomials or power series in $x$.
One readily computes
\begin{equation}\label{awop3}
\mathcal D_q (az,a/z;q)_n=-\frac{2a (1-q^n)}{(1-q)}
(aq^{\frac 12}z,aq^{\frac 12}/z;q)_{n-1}.
\end{equation}
We recall the following Taylor theorem for polynomials
$f[x]$, proved by Ismail~\cite{Is1}:
\begin{theorem}\label{thism}
If $f[x]$ is a polynomial in $x$ of degree $n$, then
\begin{equation*}
f[x]=\sum_{k=0}^nf_k\phi_k(x;a),
\end{equation*}
where
\begin{equation*}
f_k=\frac{(q-1)^k}{(2a)^k(q;q)_k}q^{-k(k-1)/4}
\big(\mathcal D_q^k f\big)[x_k], \qquad
x_k:=\frac 12(aq^{\frac k2}+q^{-\frac k2}/a).
\end{equation*}
\end{theorem}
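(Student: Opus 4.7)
The plan is to proceed by the standard method of determining coefficients in a ``Newton-type'' expansion: since $\mathcal D_q$ sends polynomials to polynomials lowering the degree by one, and $\phi_k(x;a)$ has degree $k$ in $x$, the family $\{\phi_0,\phi_1,\dots,\phi_n\}$ is a basis of the space of polynomials of degree at most $n$. Hence there exist unique scalars $f_k$ with $f[x]=\sum_{k=0}^{n}f_k\,\phi_k(x;a)$, and the problem reduces to extracting each $f_k$ by applying a suitable functional.

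First I would iterate the identity \eqref{awop3}. A simple induction yields
\begin{equation*}
\mathcal D_q^{j}(az,a/z;q)_n
=\frac{(-2a)^{j}\,q^{j(j-1)/4}\,(q;q)_n}{(1-q)^{j}\,(q;q)_{n-j}}\,
(aq^{j/2}z,aq^{j/2}/z;q)_{n-j},
\end{equation*}
the factor $q^{j(j-1)/4}$ arising from the product $a\cdot aq^{1/2}\cdots aq^{(j-1)/2}$ of the leading constants produced at each differentiation.

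Next I would evaluate at $x_j=\tfrac12(aq^{j/2}+q^{-j/2}/a)$, i.e.\ take $z=q^{-j/2}/a$. Then
\begin{equation*}
(aq^{j/2}z,aq^{j/2}/z;q)_{n-j}\Big|_{z=q^{-j/2}/a}=(1,a^2q^{j};q)_{n-j},
\end{equation*}
which vanishes for $n>j$ because of the factor $(1;q)_{n-j}$, and equals $1$ for $n=j$. Consequently, applying $\mathcal D_q^{j}$ to the expansion $f[x]=\sum_k f_k\,\phi_k(x;a)$ and evaluating at $x_j$ kills every term with $k>j$, while the terms with $k<j$ already vanish because $\mathcal D_q^{j}\phi_k=0$ for $k<j$. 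Only the diagonal term survives, giving
\begin{equation*}
\bigl(\mathcal D_q^{j}f\bigr)[x_j]=f_j\cdot\frac{(-2a)^{j}q^{j(j-1)/4}(q;q)_j}{(1-q)^{j}},
\end{equation*}
and solving for $f_j$ yields exactly the stated formula, once one writes $(-1)^j(1-q)^j=(q-1)^j$ and $q^{j(j-1)/4}$ is moved to the denominator.

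None of the steps is genuinely hard: the main bookkeeping task is the induction giving the iterated operator formula (keeping track of the $q$-power arising from the sequence of base-shifts $a\mapsto aq^{1/2}$), and the main conceptual point is the choice of evaluation point $x_j$, which is precisely the value of $x$ that forces the Askey--Wilson monomial shifted to base $aq^{j/2}$ to vanish. Once these two observations are in place, uniqueness of the expansion together with the linearity of $\mathcal D_q$ finishes the proof.
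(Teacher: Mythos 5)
Your proposal is correct and uses the same method as the paper: the paper itself only cites Ismail for Theorem~\ref{thism}, but its proof of the elliptic generalization Theorem~\ref{thnew} proceeds identically --- justify existence of the expansion by a basis argument, iterate the degree-lowering formula (here \eqref{awop3}) on the basis elements, and evaluate at the special point so that only the diagonal term survives as a Kronecker delta. Your choice of $z=q^{-j/2}/a$ instead of $z=aq^{j/2}$ is immaterial by the symmetry $z\leftrightarrow 1/z$, and your bookkeeping of the constants (in particular the factor $q^{j(j-1)/4}$ accumulated from the base shifts $a\mapsto aq^{1/2}$) is accurate.
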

As was shown in \cite{Is1},
the application of Theorem~\ref{thism} to $f(z)=(bz,b/z;q)_n$
immediately gives the $q$-Pfaff--Saalsch\"utz summation
(cf.\ \cite[Eq.~(1.7.2)]{GR}), in the form
\begin{equation*}
\frac{(bz,b/z;q)_n}{(ba,b/a;q)_n}=
{}_3\phi_2\!\left[\begin{matrix}az,a/z,q^{-n}\\
ab,q^{1-n}a/b\end{matrix};q,q\right],
\end{equation*}
while its application to
the Askey--Wilson polynomials,
\begin{equation*}
\omega_n(x;a,b,c,d;q):=
{}_4\phi_3\!\left[\begin{matrix}az,a/z,abcdq^{n-1},q^{-n}\\
ab,ac,ad\end{matrix};q,q\right],
\end{equation*}
gives a connection coefficient identity which,
by specialization, can be reduced to the Sears transformation
(cf.\ \cite[Eq.~(3.2.1)]{GR}), in the form
\begin{equation*}
\omega_n(x;a,b,c,d;q)=
\frac{a^n(bc,bd;q)_n}{b^n(ac,ad;q)_n}\,
\omega_n(x;b,a,c,d;q).
\end{equation*}

Ismail and Stanton~\cite{IS} extended the above polynomial Taylor theorem
to hold for entire functions of exponential growth,
resulting in infinite Taylor expansions.
Marco and Parcet~\cite{MP} extended this yet further to hold for
arbitrary $q$-differentiable functions, resulting in
infinite Taylor expansions with explicit remainder term.
Among other results they were able to
recover the nonterminating $q$-Pfaff--Saalsch\"utz summation
(cf.\ \cite[Appendix~(II.24)]{GR}).

\section{A well-poised and elliptic Askey--Wilson operator}
Since
\begin{multline*}
\mathcal D_q \frac{(az,a/z;q)_n}{(cz,c/z;q)_n}\\
=\frac 2{(q^{\frac 12}-q^{-\frac 12})(z-1/z)}
\left[\frac{(aq^{\frac 12}z,aq^{-\frac 12}/z;q)_n}
{(cq^{\frac 12}z,cq^{-\frac 12}/z;q)_n}-
\frac{(aq^{-\frac 12}z,aq^{\frac 12}/z;q)_n}
{(cq^{-\frac 12}z,cq^{\frac 12}/z;q)_n}\right]\\=
\frac 2{(q^{\frac 12}-q^{-\frac 12})(z-1/z)}
\frac{(aq^{\frac 12}z,aq^{\frac 12}/z;q)_{n-1}}
{(cq^{\frac 12}z,cq^{\frac 12}/z;q)_{n-1}}\\\times
\left[\frac{(1-azq^{n-\frac 12})(1-aq^{-\frac 12}/z)}
{(1-czq^{n-\frac 12})(1-cq^{-\frac 12}/z)}-
\frac{(1-azq^{-\frac 12})(1-aq^{n-\frac 12}/z)}
{(1-czq^{-\frac 12})(1-cq^{n-\frac 12}/z)}
\right]\\=
\frac{(-1)2a(1-c/a)(1-acq^{n-1})(1-q^n)}
{(1-czq^{-\frac 12})(1-czq^{\frac 12})
(1-cq^{-\frac 12}/z)(1-cq^{\frac 12}/z)(1-q)}
\frac{(aq^{\frac 12}z,aq^{\frac 12}/z;q)_{n-1}}
{(cq^{\frac 32}z,cq^{\frac 32}/z;q)_{n-1}},
\end{multline*}
it makes sense to define a $c$-generalized  Askey--Wilson operator
acting on $x$ (or $z$) by
\begin{equation*}
\mathcal D_{c,q}=(1-czq^{-\frac 12})(1-czq^{\frac 12})
(1-cq^{-\frac 12}/z)(1-cq^{\frac 12}/z)\,\mathcal D_q,
\end{equation*}
which acts ``degree-lowering'' on the ``rational monomials''
\begin{equation*}
\frac{(az,a/z;q)_n}{(cz,c/z;q)_n}
\end{equation*}
in the form
\begin{multline*}
\mathcal D_{c,q} \frac{(az,a/z;q)_n}{(cz,c/z;q)_n}=
\frac{(-1)2a(1-c/a)(1-acq^{n-1})(1-q^n)}{(1-q)}
\frac{(aq^{\frac 12}z,aq^{\frac 12}/z;q)_{n-1}}
{(cq^{\frac 32}z,cq^{\frac 32}/z;q)_{n-1}}.
\end{multline*}
Clearly,
\begin{equation*}
\mathcal D_{0,q}=\mathcal D_{q}.
\end{equation*}

More generally, for parameters $c,q,p$ with $|q|,|p|<1$, we define an
elliptic extension of the Askey--Wilson operator, acting on
functions symmetric in $z^{\pm1}$, by
\begin{equation}
\mathcal D_{c,q,p} f(z)
=2q^{\frac 12}z\,\frac{\theta(czq^{-\frac 12},czq^{\frac 12},
cq^{-\frac 12}/z,cq^{\frac 12}/z;p)}{\theta(q,z^2;p)}
\left(f(q^{\frac 12}z)-f(q^{-\frac 12}z)\right).
\end{equation}
Note that
\begin{equation*}
\mathcal D_{c,q,0}=\mathcal D_{c,q}.
\end{equation*}

In particular, using \eqref{addf}, we have
\begin{equation}\label{degl}
\mathcal D_{c,q,p} \frac{(az,a/z;q,p)_n}{(cz,c/z;q,p)_n}=
\frac{(-1)2a\,\theta(c/a,acq^{n-1},q^n;p)}{\theta(q;p)}
\frac{(aq^{\frac 12}z,aq^{\frac 12}/z;q,p)_{n-1}}
{(cq^{\frac 32}z,cq^{\frac 32}/z;q,p)_{n-1}}.
\end{equation}

The operator $\mathcal D_{c,q,p}$ is a special case of some multivariable
difference operator introduced by Rains in \cite{Ra1}.
Already in the single variable case Rains' operator
involves two more parameters than $\mathcal D_{c,q,p}$.
(Rains' difference operators generate a representation of
the Sklyanin algebra, as observed in \cite{Ra1} and made
explicit in \cite{R1} and \cite[Sec.~6]{R2}.)
Rains' operator can be specialized to act degree-lowering
(as the above $\mathcal D_{c,q,p}$ does), degree-preserving
or degree-raising on abelian functions.
Rains used his multivariable difference operators in \cite{Ra1}
to construct $BC_n$-symmetric biorthogonal abelian functions that
generalize Koornwinder's orthogonal polynomials.
He further used his operator in \cite{Ra2} to derive $BC_n$-symmetric
extensions of Frenkel and Turaev's ${}_{10}V_9$ summation
and ${}_{12}V_{11}$ transformation.

For the current presentation, as we are mainly concerned
with Taylor expansions, we find it indeed sufficient to consider
the above operator $\mathcal D_{c,q,p}$ (which exhibits a very nice 
degree-lowering action in \eqref{degl}) rather than the
more general operator considered by Rains (in one dimension).

We describe the spaces of functions we will be dealing with. 
For a complex number $c$ we define
\begin{equation*}
W_c^m:=\operatorname{span}\left\{\frac{g_n(z)}
{(cz,c/z;q,p)_n},\;0\le n\le m\right\},
\end{equation*}
where $g_n(z)$ runs over all functions being holomorphic 
for $z\neq 0$ with $g_n(z)=g_n(1/z)$ and
\begin{equation*}
g_n(pz)=\frac{1}{p^nz^{2n}}g_n(z).
\end{equation*}
In classical terminology, $g_n(z)$ is an even theta function
of order $2n$ and zero characteristics. Rains~\cite{Ra2}
refers to such functions as $BC_1$ theta functions of degree $n$,
whereas in \cite{RS} we referred to them as $D_n$ theta functions.
It is well-known that the space $V^n$ of even theta functions
of order $2n$ and zero characteristics has dimension $n+1$
(see e.g.\ Weber~\cite[p.~49]{W}).

Note that $W_c^m$ consists of certain abelian functions.
(For $p\to 0$ these degenerate to certain rational functions
we may call ``well-poised''.)

\begin{lemma}\label{Lem}
For any arbitrary but fixed complex number $a$
(satisfying $a\neq c q^jp^k$, for $j=0,\dots,m-1$,
and $k\in\mathbb Z$, and $a\neq q^jp^k/c$,
for $j=2-2m,\dots,1-m$, and $k\in\mathbb Z$), the set
\begin{equation*}
\left\{\frac{(az,a/z;q,p)_n}
{(cz,c/z;q,p)_n},0\le n\le m\right\}
\end{equation*}
forms a basis for $W_c^m$.
\end{lemma}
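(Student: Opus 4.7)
The plan is to compare $W_c^m$ with $V^m$, the $(m{+}1)$-dimensional space of even theta functions of order $2m$ with zero characteristics (Weber, p.~49), and then establish linear independence of the proposed $m+1$ basis elements by a triangular evaluation argument.

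First I would prove $\dim W_c^m = m+1$ by exhibiting an isomorphism
$$\Phi:W_c^m\longrightarrow V^m,\qquad f\longmapsto (cz,c/z;q,p)_m\,f.$$
The summand $g_n(z)/(cz,c/z;q,p)_n$ maps to $g_n(z)(cq^nz,cq^n/z;q,p)_{m-n}$; using \eqref{pid} and the assumed evenness of $g_n$, the second factor lies in $V^{m-n}$, so the image lies in $V^m$. The map $\Phi$ is injective because $(cz,c/z;q,p)_m\not\equiv0$, and surjective because any $F\in V^m$ is the image of its own $n=m$ representative $F/(cz,c/z;q,p)_m$. The proposed basis elements $B_n:=(az,a/z;q,p)_n/(cz,c/z;q,p)_n$ visibly belong to $W_c^m$, with numerator $(az,a/z;q,p)_n\in V^n$ again by \eqref{pid}. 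Since the count $m+1$ matches the dimension, only linear independence remains to be shown.

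Applying $\Phi$ reduces the problem to linear independence inside $V^m$ of the functions
$$E_n(z):=(az,a/z;q,p)_n(cq^nz,cq^n/z;q,p)_{m-n},\qquad n=0,\dots,m.$$
I would evaluate a hypothetical relation $\sum_n\lambda_n E_n(z)=0$ at the points $z_k:=q^{1-k}/c$ for $k=1,\dots,m$. A direct zero-count in the factor $(cq^nz_k,cq^n/z_k;q,p)_{m-n}$ shows that $\theta(cq^{n+j}z_k;p)=\theta(q^{n+j-k+1};p)$ vanishes (for generic $q,p$) precisely when $j=k-1-n\in\{0,\dots,m-n-1\}$, i.e., when $n<k$. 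Hence at $z=z_k$ only terms with $n\ge k$ survive, producing an upper-triangular system. Back-substitution from $k=m$ (where only the $n=m$ term contributes) down to $k=1$ yields $\lambda_m=\lambda_{m-1}=\dots=\lambda_1=0$, and then $\lambda_0=0$ follows from $E_0(z)=(cz,c/z;q,p)_m\not\equiv0$ evaluated at any generic point.

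The hard step is verifying non-vanishing of the diagonal entries
$$E_k(z_k)=(aq^{1-k}/c,\,acq^{k-1};q,p)_k\,(q,\,c^2q^{2k-1};q,p)_{m-k}$$
at each $k=1,\dots,m$. The factor $(aq^{1-k}/c;q,p)_k$ is non-vanishing precisely when $a\ne cq^jp^\ell$ for $j=0,\dots,k-1$, a subset of hypothesis~(i); at the top step $k=m$, $(acq^{m-1};q,p)_m\ne0$ is exactly hypothesis~(ii). The main technical obstacle is that for intermediate $k<m$ the exclusion range $\{2-2k,\dots,1-k\}$ needed for $(acq^{k-1};q,p)_k$ is not contained in hypothesis~(ii)'s range $\{2-2m,\dots,1-m\}$, so the diagonal could \emph{a priori} vanish for an admissible~$a$. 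I would resolve this by induction on~$m$: the top step $k=m$ uses exactly hypothesis~(ii) to deduce $\lambda_m=0$; one application of $\mathcal D_{c,q,p}$ via \eqref{degl} then transfers the residual relation to the shifted parameters $(aq^{1/2},cq^{3/2},m-1)$, for which hypotheses~(i) and~(ii) for $m$ imply the corresponding hypotheses for $m-1$ by a direct bookkeeping check of the exclusion ranges (the shifted ranges $\{1,\dots,m-1\}$ and $\{2-2m,\dots,-m\}$ are genuinely contained in $\{0,\dots,m-1\}$ and $\{2-2m,\dots,1-m\}$, respectively). That combinatorial matching across the induction is the principal technical point.
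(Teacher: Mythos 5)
Your overall route is the same as the paper's: you reduce the claim to linear independence of the products $E_n(z)=(az,a/z;q,p)_n(cq^nz,cq^n/z;q,p)_{m-n}$ inside the $(m+1)$-dimensional space $V^m$ (these are exactly the functions in \eqref{basisv}), and you test a putative relation at the points $z_k\sim cq^{k-1}$ --- precisely the evaluation points the paper uses, one per step of its induction on $m$. Your triangular back-substitution is an unrolled version of that induction, and your dimension count via $\Phi$ makes explicit what the paper leaves implicit. Your zero-count and your formula for the diagonal entries $E_k(z_k)$ are correct.

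The problem lies in the ``hard step'' you flag, and your proposed resolution does not close it. You are right that for $1\le k<m$ the non-vanishing of $(acq^{k-1};q,p)_k$ requires $a\neq q^jp^{\ell}/c$ for $j\in\{2-2k,\dots,1-k\}$, and that the union of these ranges over $k$ is $\{2-2m,\dots,0\}$, strictly larger than the stated $\{2-2m,\dots,1-m\}$. But the detour through $\mathcal D_{c,q,p}$ cannot repair this: by \eqref{degl} the operator multiplies the $n$-th coefficient by $\theta(c/a,acq^{n-1},q^n;p)$, and the factor $\theta(acq^{n-1};p)$ vanishes exactly when $a=q^{1-n}p^{\ell}/c$ with $1\le n\le m-1$, i.e.\ on the very set $j\in\{2-m,\dots,0\}$ you are trying to cover; you then cannot deduce $\lambda_n=0$ from the vanishing of the transferred coefficients, so the obstruction simply reappears in the multipliers (your bookkeeping of the shifted exclusion ranges for the induction hypothesis is fine --- it is the multipliers that fail). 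Moreover, no argument can close this gap, because the statement is false for such $a$: take $a=1/c$, which is admissible for $m\ge2$ since $0\notin\{2-2m,\dots,1-m\}$ and $1/c\ne cq^jp^{\ell}$ for generic $c$; then \eqref{tif} gives $\theta(z/c,1/cz;p)=c^{-2}\,\theta(cz,c/z;p)$, so the $n=0$ and $n=1$ elements of the proposed basis are proportional. The hypothesis should read $a\neq q^jp^{\ell}/c$ for $j=2-2m,\dots,0$. For what it is worth, the paper's own induction suffers from the identical defect (its step from $m$ to $m+1$ invokes the induction hypothesis at level $m$, whose exclusion range $\{2-2m,\dots,1-m\}$ is not contained in the level-$(m+1)$ range $\{-2m,\dots,-m\}$), so your diagnosis of where the difficulty sits is accurate; only the proposed cure is not.
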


\begin{proof}
This is equivalent to the fact that the set
\begin{equation}\label{basisv}
\left\{(az,a/z;q,p)_n(cq^nz,cq^n/z;q,p)_{m-n},\;0\le n\le m\right\}
\end{equation}
forms a basis for $V^m$, the space of even theta functions
of order $2m$ and zero characteristics, a fact easily proved by
induction on $m$. For $m=0$ the statement is trivial.
Now assume that it holds for a fixed $m\ge 0$. Since the
$m+1$ products in \eqref{basisv}
are linearly independent, it follows (by multiplication with the
common factor $\ta(cq^mz,cq^m/z;p)$) that
the $m+1$ products
\begin{equation*}
(az,a/z;q,p)_n(cq^nz,cq^n/z;q,p)_{m+1-n},\quad 0\le n\le m,
\end{equation*}
are also linearly independent. It thus remains to be shown that
$(az,a/z;q,p)_{m+1}$ is not a linear combination of
$\{(az,a/z;q,p)_n(cq^nz,cq^n/z;q,p)_{m+1-n},\; 0\le n\le m\}$.
Suppose
\begin{equation*}
(az,a/z;q,p)_{m+1}=\sum_{n=0}^m\alpha_n
(az,a/z;q,p)_n(cq^nz,cq^n/z;q,p)_{m+1-n}.
\end{equation*}
Letting $z=cq^m$ gives $(acq^m,aq^{-m}/c;q,p)_{m+1}=0$,
which is a contradiction.
\end{proof}

Note that, in view of \eqref{degl},
the elliptic Askey--Wilson operator maps functions in $W_c^m$ to
functions in $W_{cq^{\frac 32}}^{m-1}$.

We now define
\begin{equation}\label{defd}
\mathcal D_{c,q,p}^{(k)}=\mathcal D_{cq^{\frac 32},q,p}^{(k-1)}\;
\mathcal D_{c,q,p},
\end{equation}
with $\mathcal D_{c,q,p}^{(0)}=\varepsilon$, the identity operator.
We have the following elliptic expansion theorem which extends
Theorem~\ref{thism}:
\begin{theorem}\label{thnew}
Let $f$ be in $W_c^n$, then
\begin{equation}\label{exp}
f(z)=\sum_{k=0}^nf_k\,\frac{(az,a/z;q,p)_k}
{(cz,c/z;q,p)_k},
\end{equation}
where
\begin{equation*}
f_k=\frac{(-1)^kq^{-k(k-1)/4}\,\ta(q;p)^k}{(2a)^k(q,c/a,acq^{k-1};q,p)_k}
\big[\mathcal D_{c,q,p}^{(k)} f\big]_{z=aq^{\frac k2}}.
\end{equation*}
\end{theorem}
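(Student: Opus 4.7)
The plan is to invoke Lemma~\ref{Lem} for the existence and uniqueness of an expansion of the form \eqref{exp}, and then to isolate each coefficient $f_k$ by applying $\mathcal{D}_{c,q,p}^{(k)}$ and specializing $z$ in a way that collapses the sum to its diagonal term. Since Lemma~\ref{Lem} already supplies a unique such representation of $f$, only the identification of the scalars $f_k$ remains.

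The key computational step is an iterated version of the degree-lowering identity \eqref{degl}. Unfolding the recursion \eqref{defd} as $\mathcal{D}_{c,q,p}^{(k)}=\mathcal{D}_{cq^{3(k-1)/2},q,p}\cdots\mathcal{D}_{cq^{3/2},q,p}\mathcal{D}_{c,q,p}$, and observing that a single application of \eqref{degl} promotes the parameters $(a,c)\mapsto(aq^{1/2},cq^{3/2})$ while dropping the degree by one, a straightforward induction on $k$ should yield, for $0\le k\le m$,
\begin{equation*}
\mathcal{D}_{c,q,p}^{(k)}\frac{(az,a/z;q,p)_m}{(cz,c/z;q,p)_m}
=\frac{(-2a)^k q^{k(k-1)/4}\,(c/a,acq^{m-1},q^{m-k+1};q,p)_k}{\theta(q;p)^k}\,\frac{(aq^{k/2}z,aq^{k/2}/z;q,p)_{m-k}}{(cq^{3k/2}z,cq^{3k/2}/z;q,p)_{m-k}}.
\end{equation*}
The three elliptic Pochhammer factors accumulate as $\prod_{j=0}^{k-1}\theta(cq^j/a;p)$, $\prod_{j=0}^{k-1}\theta(acq^{m+j-1};p)$, and $\prod_{j=0}^{k-1}\theta(q^{m-j};p)$, one theta from each step of the iteration. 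For $k>m$ the result vanishes, either because $\theta(1;p)=0$ already sits inside $(q^{m-k+1};q,p)_k$, or equivalently because $\mathcal{D}_{c',q,p}$ annihilates constants.

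The decisive trick is the specialization $z=aq^{k/2}$: the surviving rational factor becomes $(a^2q^k,1;q,p)_{m-k}/(acq^{2k},cq^k/a;q,p)_{m-k}$, and $(1;q,p)_{m-k}$ carries a zero whenever $m>k$. Hence, applying $\mathcal{D}_{c,q,p}^{(k)}$ to both sides of \eqref{exp} and evaluating at $z=aq^{k/2}$ kills every summand except the diagonal one $m=k$, where the rational factor equals $1$ and $(q^{m-k+1};q,p)_k$ collapses to $(q;q,p)_k$. One thus obtains
\begin{equation*}
\bigl[\mathcal{D}_{c,q,p}^{(k)}f\bigr]_{z=aq^{k/2}}
=f_k\cdot\frac{(-2a)^k q^{k(k-1)/4}\,(q,c/a,acq^{k-1};q,p)_k}{\theta(q;p)^k},
\end{equation*}
from which the claimed formula for $f_k$ drops out by division.

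The principal technical hurdle is the bookkeeping in the iterated degree-lowering formula: one must carefully track how the shifts $a\mapsto aq^{j/2}$ and $c\mapsto cq^{3j/2}$ at the $j$-th step interact with the theta factors produced by \eqref{degl} at that step, so as to recognize the three compact Pochhammer products above. Apart from this \emph{routine} (if slightly fiddly) induction, the proof is essentially formal; the triangularity that makes the argument work is nothing but the \emph{double} zero of the shifted rational monomial at $z=aq^{k/2}$.
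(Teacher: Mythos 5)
Your proposal is correct and follows essentially the same route as the paper: existence via Lemma~\ref{Lem}, an induction on $k$ giving the iterated degree-lowering formula (your $(q^{m-k+1};q,p)_k$ is exactly the paper's $(q;q,p)_m/(q;q,p)_{m-k}$), and evaluation at $z=aq^{k/2}$ to produce the Kronecker delta that isolates $f_k$. The only blemish is the closing remark about a ``double'' zero: the vanishing for $m>k$ comes from the single factor $\ta(aq^{k/2}/z;p)$ at $z=aq^{k/2}$, i.e.\ from $\ta(1;p)=0$ inside $(1;q,p)_{m-k}$, which is generically a simple zero --- but a simple zero is all the argument needs.
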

\begin{proof}
First of all, due to Lemma~\ref{Lem} it is clear that the expansion
\eqref{exp} exists, so we just need to compute the coefficients $f_k$.
Formula \eqref{degl} yields (together with \eqref{defd})
\begin{multline*}
\left[\mathcal D_{c,q,p}^{(k)}\frac{(az,a/z;q,p)_n}
{(cz,c/z;q,p)_n}\right]_{z=aq^{\frac k2}}\\=
(-1)^k(2a)^kq^{\binom k2/2}\frac{(q;q,p)_n(c/a,acq^{n-1};q,p)_k}
{(q;q,p)_{n-k}\,\ta(q;p)^k}
\left[\frac{(aq^{\frac k2}z,aq^{\frac k2}/z;q,p)_{n-k}}
{(cq^{\frac {3k}2}z,cq^{\frac {3k}2}/z;q,p)_{n-k}}\right]_{z=aq^{\frac k2}}\\=
(-1)^k(2a)^kq^{\binom k2/2}
\frac{(q,c/a,acq^{k-1};q,p)_k}{\ta(q;p)^k}\delta_{nk}.
\end{multline*}
The theorem now follows by applying $\mathcal D_{c,q,p}^{(j)}$ to both sides
of \eqref{exp} and then setting $z=aq^{\frac j2}$.
\end{proof}
\begin{example}
Let
\begin{equation*}
f(z)=\frac{(bz,b/z;q,p)_n}
{(cz,c/z;q,p)_n}.
\end{equation*}
Application of Theorem~\ref{thnew} in conjuction with \eqref{degl} gives
\begin{multline*}
f_k=\frac{(-1)^kq^{-k(k-1)/4}\,
\ta(q;p)^k}{(2a)^k(q,c/a,acq^{k-1};q,p)_k}\\\times
(-1)^k(2b)^kq^{\binom k2/2}
\frac{(q;q,p)_n(c/b,bcq^{n-1};q,p)_k}{(q;q,p)_{n-k}\,\ta(q;p)^k}
\frac{(abq^k,b/a;q,p)_{n-k}}{(acq^{2k},cq^k/a;q,p)_{n-k}}\\=
\frac{(ab,b/a;q,p)_n}{(ac,c/a;q,p)_n}\frac{\ta(acq^{2k-1};p)}{\ta(acq^{-1};p)}
\frac{(acq^{-1},c/b,bcq^{n-1},q^{-n};q,p)_k}{(q,ab,aq^{1-n}/b,acq^n;q,p)_k}q^k,
\end{multline*}
thus yielding Frenkel and Turaev's ${}_{10}V_9$ summation \eqref{10V9},
in the form
\begin{equation*}
\frac{(ac,c/a,bz,b/z;q,p)_n}
{(ab,b/a,cz,c/z;q,p)_n}={}_{10}V_9(acq^{-1};az,a/z,c/b,bcq^{n-1},q^{-n};q,p).
\end{equation*}
\end{example}
\begin{example}
Let
\begin{equation*}
R_n(z;b,c,d,e,f;q,p)\\
={}_{12}V_{11}(bcq^{-1};bz,b/z,d,e,f,bc^3q^{n-1}/def,q^{-n};q,p),
\end{equation*}
which is Spiridonov's~\cite{Sp2} elliptic extension of
Rahman's family of biorthogonal rational functions.
We have 
\begin{multline*}
\mathcal D_{c,q,p}^{(k)}R_n(z;b,c,d,e,f;q,p)\\
=\frac{(-1)^k(2b)^kq^{k(k+3)/4}\,(bc;q,p)_{2k}
(c/b,d,e,f,bc^3q^{n-1}/def,q^{-n};q,p)_k}
{\ta(q;p)^k\,(bc/d,bc/e,bc/f,bcq^n,defq^{1-n}/c^2;q,p)_k}\\\times
R_{n-k}(z;bq^{\frac k2},cq^{\frac{3k}2},dq^k,eq^k,fq^k;q,p).
\end{multline*}
Application of Theorem~\ref{thnew} now yields, after some computation,
the connection coefficient identity
\begin{multline}\label{concoeff}
R_n(z;b,c,d,e,f;q,p)\\=\sum_{k=0}^n\frac{(az,a/z;q,p)_k}
{(cz,c/z;q,p)_k}\frac{b^kq^k(bc;q,p)_{2k}
(c/b,d,e,f,bc^3q^{n-1}/def,q^{-n};q,p)_k}{a^k\,
(q,bc/d,bc/e,bc/f,bcq^n,defq^{1-n}/c^2;q,p)_k}\\\times
R_{n-k}(aq^{\frac k2};bq^{\frac k2},cq^{\frac{3k}2},dq^k,eq^k,fq^k;q,p).
\end{multline}
(Observe that the left-hand side of \eqref{concoeff} is independent of $a$.)

Now note that
\begin{equation}
R_m(c/f;b,c,d,e,f;q,p)=\frac{(bc,bc/de,c^2/df,c^2/ef;q,p)_m}
{(bc/d,bc/e,c^2/f,c^2/def;q,p)_m},
\end{equation}
due to Frenkel and Turaev's ${}_{10}V_9$ summation.
Letting $a\to c/f$ in \eqref{concoeff} gives, after some simplification,
\begin{multline}
R_n(z;b,c,d,e,f;q,p)\\=\frac{(bc,bc/de,c^2/df,c^2/ef;q,p)_n}
{(bc/d,bc/e,c^2/f,c^2/def;q,p)_n}\,R_n(z;c/f,c,d,e,c/b;q,p),
\end{multline}
 which is equivalent to 
Frenkel and Turaev's ${}_{12}V_{11}$ transformation in \eqref{12V11}.
\end{example}

\section{Outlook: Well-poised basic expansions}

As an outlook we sketch some details of our further investigations.
These concern infinite convergent expansions in the basic $p=0$ case.

For instance, using an extension of the well-poised basic
Taylor expansion theorem involving a remainder term, we
obtain, by using a symmetry argument, the following expansion:
\begin{multline*}
\frac{(cz/d,c/dz,cz/e,c/ez;q)_\infty}{(cz,c/z,c^2z/bde,c^2/bdez;q)_\infty}
=\frac{(cz/de,c/dez;q)_\infty}{(c^2z/bde,c^2/bdez;q)_\infty}
\sum_{k\ge 0}f_k\,\frac{(bz,b/z;q)_k}{(cz,c/z;q)_k}\\
+\frac{(bz,b/z;q)_\infty}{(cz,c/z;q)_\infty}\sum_{k\ge 0}g_k\,
\frac{(cz/de,c/dez;q)_k}{(c^2z/bde,c^2/bdez;q)_k}.
\end{multline*}
After the explicit computation of the coefficients $f_k$ and $g_k$
one recovers the nonterminating ${}_8\phi_7$ summation
(cf.\ \cite[Appendix~(II.25)]{GR}), in the form
\begin{multline*}
\frac{(cz/d,c/dz,cz/e,c/ez;q)_\infty}{(cz,c/z,c^2z/bde,c^2/bdez;q)_\infty}\\
=\frac{(cz/de,c/dez;q)_\infty}{(c^2z/bde,c^2/bdez;q)_\infty}
\sum_{k\ge 0}\frac{(1-bcq^{2k-1})}{(1-bcq^{-1})}
\frac{(bcq^{-1},d,e,c^2/deq,bz,b/z;q)_k}
{(q,bc/d,bc/e,bdeq/c,cz,c/z;q)_k}q^k\\
+\frac{(bz,b/z;q)_\infty}{(cz,c/z;q)_\infty}\sum_{k\ge 0}
\frac{(1-c^3q^{2k-1}/bd^2e^2)}{(1-c^3/bd^2e^2q)}\\\times
\frac{(c^3/bd^2e^2q,c/bd,c/be,c^2/deq,cz/de,c/dez;q)_k}
{(q,c^2/de^2,c^2/d^2e,cq/bde,c^2z/bde,c^2/bdez;q)_k}q^k.
\end{multline*}

To give another example, by expanding the ``quadratic'' infinite product
\begin{equation*}
f(z)=\frac{(azq,aq/z,b^2z/a,b^2/az;q^2)_\infty}
{(bz,b/z;q)_\infty}
\end{equation*}
in terms of the ``well-poised monomials''
\begin{equation*}
\frac{(az,a/z;q)_k}
{(bz,b/z;q)_k},
\end{equation*}
we recover a particular nonterminating ${}_8\phi_7$ summation,
namely Bailey's $q$-analogue of Watson's $_3F_2$ summation
(cf.\ \cite[Ex.~2.17(i)]{GR}), in the form
\begin{multline*}
\frac{(azq,aq/z,b^2z/a,b^2/az;q^2)_\infty}
{(bz,b/z;q)_\infty}
=\frac{(q,a^2q,b^2,b^2/a^2;q^2)_\infty}{(-ab,-b/a;q)_\infty}\\\times
\sum_{k\ge 0}\frac{(1+abq^{2k-1})}{(1+abq^{-1})}
\frac{(-abq^{-1},bq^{-\frac 12},-bq^{-\frac 12},-aq/b,az,a/z;q)_k}
{(q,-aq^{\frac 12},aq^{\frac 12},b^2q^{-1},bz,b/z;q)_k}
\left(\frac ba\right)^k.
\end{multline*}

A paper featuring these well-poised basic expansions is under preparation.

\bibliographystyle{amsalpha}

\begin{thebibliography}{AA}

\bibitem{AW}
R.\ Askey and J.\ A.\ Wilson, 
\textit{Some basic hypergeometric
orthogonal polynomials that generalize Jacobi polynomials},
Mem.\ Amer.\ Math.\ Soc.\ \textbf{319} (1985), iv+55 pp.

\bibitem{B} R.\ J.\ Baxter,
\textit{Eight-vertex model in lattice statistics and
one-dimensional anisotropic Heisenberg chain,
II: Equivalence to a generalized ice-type model},
Ann.\ Phys.\ \textbf{76} (1973), 193--228.

\bibitem{DJKMO} E.\ Date, M.\ Jimbo, A.\ Kuniba, T.\ Miwa, M.\ Okado,
\textit{Exactly solvable SOS models:
local height probabilities and theta function identities},
Nuclear Phys.\ B \textbf{290} (1987), 231--273.

\bibitem{FT}
I.\ B.\ Frenkel and V.\ G.\ Turaev,
\textit{Elliptic solutions of the
Yang--Baxter equation and modular hypergeometric functions}, in:
V.\ I.\ Arnold, I.\ M.\ Gelfand, V.\ S.\ Retakh and M.\ Smirnov (eds.),
\textit{The Arnold--Gelfand mathematical seminars}, 
Birkh\"auser, Boston, 1997, pp.~171--204.

\bibitem{GR}
G.\ Gasper and M.\ Rahman,
\textit{Basic Hypergeometric Series,  $2^\text{nd}$ Edition},
Encyclopedia of Mathematics and its Applications, vol.~96,
Cambridge University Press, Cambridge, 2004.

\bibitem{Is1}
M.\ E.\ H.\ Ismail,
\textit{The Askey--Wilson operator and summation theorems},
Contemp.\ Math.\ \textbf{190} (1995), 171--178.

\bibitem{IS}
M.\ E.\ H.\ Ismail and D.\ Stanton,
\textit{Applications of $q$-Taylor theorems},
J.\ Comput.\ Appl.\ Math.\ \textbf{153} (2003), 259--272.

\bibitem{J}
F.\ H.\ Jackson,
\textit{Summation of $q$-hypergeometric series},
Messenger of Math.\ \textbf{57} (1921), 101--112.

\bibitem{MP}
J.\ M.\ Marco, J.\ Parcet,
\textit{A new approach to the theory of
classical hypergeometric polynomials},
Trans.\ Amer.\ Math.\ Soc.\ \textbf{358} (2006), 183--214.

\bibitem{Ra}
M.\ Rahman,
\textit{An integral representation of a ${}_{10}\phi_9$ and
continuous biorthogonal ${}_{10}\phi_9$ rational functions},
Canad.\ J.\ Math.\ \textbf{38} (1986), 605--618.

\bibitem{Ra1}
E.\ M.\ Rains,
\textit{$BC_n$-symmetric abelian functions},
Duke Math.\ J.\ \textbf{135} (2006), 99--180.

\bibitem{Ra2}
E.\ M.\ Rains,
\textit{Transformations of elliptic hypergeometric integrals},
Annals Math., to appear.

\bibitem{R1}
H.\ Rosengren,
\textit{Sklyanin invariant integration},
Int.\ Math.\ Res.\ Not.\ \textbf{2004} (2004), 3207--3232.

\bibitem{R2}
H.\ Rosengren,
\textit{An elementary approach to $6j$-symbols
(classical, quantum, rational, trigonometric, and elliptic)},
Ramanujan J.\ \textbf{13} (2007), 133--168.

\bibitem{RS}
H.\ Rosengren and M.\ J.\ Schlosser,
\textit{Elliptic determinant evaluations and the Macdonald identities
for affine root systems},
Compos.\ Math.\ \textbf{142} (4) (2006), 937--961.

\bibitem{RKO}
G.-C.\ Rota, D.\ Kahaner, A.~Odlyzko,
\textit{On the foundations of combinatorial theory.
VIII. Finite operator calculus},
J.\ Math.\ Anal.\ Appl.\ \textbf{42} (1973), 684--760.

\bibitem{Sp} V.\ P.\ Spiridonov, 
\textit{Theta hypergeometric series},
in V.~A.~Malyshev and A.~M.~Vershik (eds.),
\textit{Asymptotic Combinatorics with Applications to Mathematical Physics},
Kluwer Acad.\ Publ., Dordrecht, 2002, pp.~307--327.

\bibitem{Sp2}
V.\ P.\ Spiridonov,
\textit{Theta hypergeometric integrals},
Algebra i Analiz \textbf{15} (2003), 161--215
(St.\ Petersburg Math.\ J.\ \textbf{15} (2004), 929--967).

\bibitem{W}
H.\ Weber,
\textit{Elliptische Functionen und Algebraische Zahlen}, Vieweg-Verlag,
Braunschweig, 1897.

\bibitem{WW}
E.\ T.\ Whittaker and G.\ N.\ Watson,
\textit{A Course of Modern Analysis}, 4th ed.,
Cambridge University Press, Cambridge, 1962.

\end{thebibliography}

\end{document}